\newtheorem{theorem}{Theorem}
\newtheorem{lemma}[theorem]{Lemma}
\newtheorem{proposition}[theorem]{Proposition}
\newtheorem{corollary}[theorem]{Corollary}
\theoremstyle{definition}
\newtheorem{definition}[theorem]{Definition}
\newtheorem{observation}[theorem]{Observation}
\newtheorem{remark}[theorem]{Remark}
\newtheorem{example}[theorem]{Example}
\newtheorem{problem}{Problem}
\newtheorem{question}[theorem]{Question}
\newenvironment{thm}{\begin{theorem}}{\end{theorem}}
\newenvironment{lem}{\begin{lemma}}{\end{lemma}}
\newenvironment{cor}{\begin{corollary}}{\end{corollary}}
\def\1{{\bf 1}}
\title{Unified bounds for the independence number of graph powers}
\author{Aida Abiad\thanks{\texttt{a.abiad.monge@tue.nl}, Department of Mathematics and Computer Science, Eindhoven University of Technology, The Netherlands} \thanks{Department of Mathematics and Data Science, Vrije Universiteit Brussel, Belgium} \qquad Jiang Zhou\thanks{\texttt{zhoujiang@hrbeu.edu.cn}, College of Mathematical Sciences, Harbin Engineering University, Harbin 150001, PR China}}
\date{}
\begin{document}
\maketitle

\begin{abstract}
For a graph $G$, its $k$-th power $G^k$ is constructed by placing an edge between two vertices if they are within distance $k$ of each other. The $k$-independence number $\alpha_k(G)$ is defined as the independence number of $G^k$. By using general semidefinite programming and polynomial methods, we derive sharp bounds for the $k$-independence number of a graph, which extend and unify various existing results. Our work also allows us to easily derive some new  bounds for $\alpha_k(G)$.\\

\noindent\textbf{Keywords:} graph power, independence number, eigenvalues.

\end{abstract}

\section{Introduction}

We consider simple undirected graphs. Let $G$ be a graph with vertex set $V(G)$ and edge set $E(G)$. Let $d(i,j)$ denote the distance between vertices $i$ and $j$. A \emph{$k$-independent set} in a graph $G$ is a vertex subset $U\subseteq V(G)$ such that $d(i,j)>k$ for any two distinct vertices $i,j\in U$. The \emph{$k$-independence number} of $G$, denoted by $\alpha_k(G)$, is the size of the largest $k$-independent set in $G$. The $k^{\text{th}}$ \emph{power graph} of $G$, denoted by $G^k$, is the graph with the same vertices as $G$ where two vertices are adjacent if they are at distance at most $k$ in $G$. Alternatively, $\alpha_k(G)=\alpha(G^k)$, and $\alpha_1(G)=\alpha(G)$ equals the independence number of $G$. Graph powers are useful in designing efficient algorithms for certain combinatorial optimization problems, see, e.g., \cite{2,5}. In distributed computing, the $k$-th power of graph $G$ represents the possible flow of information during $k$ rounds of communication in a distributed network of processors organized according to $G$ \cite{17}. For other work involving graph powers, see, e.g., \cite{1, 9,13,15,16}.

For an $n$-vertex $d$-regular graph $G$, the well-known Hoffman ratio bound says that $\alpha(G)\leq n\frac{|\tau|}{d-\tau}$, where $\tau$ is the minimum eigenvalue of $G$. Several generalized Hoffman ratio bounds, some of which use Laplacian eigenvalues, have appeared in the literature, see \cite{DH1998,GN,H1995,LLT2007,Z2004}. The Lov\'{a}sz theta function $\vartheta(G)$, which was introduced in \cite{Lovasz1979}, gives an upper bound for $\alpha(G)$ which is known to be better than the Hoffman bound. The Lov\'{a}sz theta function is a powerful tool for studying the Shannon capacity $\Theta(G)$. Indeed, Lov\'{a}sz \cite{Lovasz1979} proved that $\alpha(G)\leq\Theta(G)\leq\vartheta(G)$. The Schrijver theta function $\vartheta'(G)$, introduced in \cite{Schrijver}, is an even better upper bound for $\alpha(G)$ since it holds that $\alpha(G)\leq\vartheta'(G)\leq\vartheta(G)$. In \cite{z2023}, $\vartheta(G)$, $\vartheta'(G)$ and various generalized Hoffman ratio bounds were unified into one form.

Since~$\alpha_k(G) = \alpha(G^k)$, most bounds on the independence number can be applied directly to upper bound the $k$-independence number. However, to use the known eigenvalue bounds for $\alpha$ on $G^k$, such as the Hoffman ratio bound, one needs to know the spectrum of $G^k$, which in general cannot be derived from the spectrum of $G$ itself. Therefore, Abiad et al. \cite{acf2019} proposed an inertia- and ratio-type bound on the $k$-independence number that only depend on the spectrum of $G$. These inertia- and ratio-type bounds, which are known to be sharp for several graph classes, use a degree-$k$ polynomial which can be chosen freely. The quality of such bounds depends on the choice of the polynomial, so finding the best possible upper bound for a given graph is in fact an optimization problem. For the case of $k$-partially walk-regular graphs, the optimal polynomials for the optimization of the ratio-type bound was studied by Fiol \cite{fiol20}. For general graphs, the optimal polynomials for the inertia-type bound was shown by Abiad et al. \cite{acfns2022}. Recently, the ratio-type bound on $\alpha_k(G)$ was used to estimate the maximum size of a code in a certain metric, often outperforming the state of the art coding bounds, see \cite{ANR2024,AKR2024}.

In this paper we present general semidefinite programming (SDP) and polynomial methods to establish unified bounds on $\alpha_k(G)$, and we discuss its applications by deriving several existing results in the literature (such as the ratio-type bound). Our framework extends the recent results by the second author \cite{z2023}. Moreover, as a byproduct of our general method, we obtain several of the mentioned known bounds on $\alpha_k(G)$ such as \cite[Theorem 3.2]{acf2019} (or alternatively \cite[Corollary 28]{AZ2024}), \cite[Theorem 4.1]{fiol20} and \cite[Theorem 4.3]{GN}. Our framework also allows us to easily derive some new bounds for $\alpha_k(G)$.

This paper is organized as follows. In Section \ref{sec:preiminaries}, we establish the used notation and introduce some preliminary lemmas. In Section 3, we give unified bounds on $\alpha_k(G)$, $\vartheta(\overline{G^k})$ and $\vartheta'(\overline{G^k})$ in terms of graph matrices of $G$. Based on the results in Section \ref{sec:frameworkunification}, we derive various old and new algebraic bounds on $\alpha_k(G)$ in Section \ref{sec:consequences}. We end up with some concluding remarks in Section \ref{sec:concludingremarks}.

\section{Preliminaries}\label{sec:preiminaries}

For a graph $G$ with vertex set $V(G)$ and edge set $E(G)$, the \textit{adjacency matrix} $A_G$ of $G$ is a $|V(G)|\times|V(G)|$ symmetric matrix with entries
\begin{eqnarray*}
(A_G)_{ij}=\begin{cases}1~~~~~~~~~~\mbox{if}~\{i,j\}\in E(G),\\
0~~~~~~~~~~\mbox{if}~\{i,j\}\notin E(G).\end{cases}
\end{eqnarray*}
Eigenvalues of the adjacency matrix of $G$ are called \textit{eigenvalues} of $G$. 

For a real square matrix $M$, the \textit{group inverse} of $M$, denoted by $M^\#$, is the matrix $X$ such that $MXM=M,~XMX=X$ and $MX=XM$. It is known that $M^\#$ exists if and only if $\mbox{\rm rank}(M)=\mbox{\rm rank}(M^2)$. If $M^\#$ exists, then $M^\#$ is unique. Let ${\rm tr}~M$ denote the trace of $M$.

For a positive semidefinite real matrix $M$, there exists an orthogonal matrix $U$ such that
\begin{eqnarray*}
M=U{\rm diag}(\lambda_1,\ldots,\lambda_n)U^\top,
\end{eqnarray*}
where ${\rm diag}(\lambda_1,\ldots,\lambda_n)$ denotes a diagonal matrix with nonnegative diagonal entries $\lambda_1,\ldots,\lambda_n$. Then
\begin{eqnarray*}
M^\#=U{\rm diag}(\lambda_1^+,\ldots,\lambda_n^+)U^\top,
\end{eqnarray*}
where $\lambda_i^+=\lambda_i^{-1}$ if $\lambda_i>0$, and $\lambda_i^+=0$ if $\lambda_i=0$.
\begin{lem}\label{lema:1}
\textup{\cite{z2023}} Let $M$ be a square matrix such that $M^\#$ exists, and let $\lambda\neq0$ be an eigenvalue of $M$ with an eigenvector $x$. Then
\begin{eqnarray*}
M^\#x=\lambda^{-1}x.
\end{eqnarray*}
\end{lem}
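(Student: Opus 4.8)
The plan is to work directly from the three defining relations of the group inverse, namely $MM^\#M=M$, $M^\#MM^\#=M^\#$, and $MM^\#=M^\#M$, without ever attempting to invert $M$ (which may well be singular). Write $x$ for the given eigenvector, so that $Mx=\lambda x$ with $\lambda\neq 0$. The whole argument is a two-step cancellation using these relations together with the eigenvalue equation.

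First I would apply the relation $MM^\#M=M$ to the vector $x$. Using $Mx=\lambda x$ on the innermost factor gives $MM^\#(\lambda x)=\lambda x$, that is, $\lambda\,MM^\#x=\lambda x$; since $\lambda\neq 0$ I may cancel it to obtain $MM^\#x=x$. Next I would invoke the commutation relation $MM^\#=M^\#M$ and apply it to $x$, which yields $MM^\#x=M^\#Mx=M^\#(\lambda x)=\lambda\,M^\#x$, again using $Mx=\lambda x$. Comparing the two resulting identities gives $\lambda\,M^\#x=x$, and a final division by $\lambda\neq 0$ produces the claimed equality $M^\#x=\lambda^{-1}x$.

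There is no genuine obstacle here; the only point requiring care is to rely on exactly the relations that hold for every square matrix admitting a group inverse, rather than on the spectral formula $M^\#=U\diag(\lambda_1^+,\ldots,\lambda_n^+)U^\top$ recalled above, which is available only in the positive semidefinite case, whereas the statement is for a general square matrix. The hypothesis $\lambda\neq 0$ is used precisely at the two cancellation steps, and it is exactly what keeps the kernel of $M$ (on which $M^\#$ acts as the zero map) from interfering with the eigenvector $x$.
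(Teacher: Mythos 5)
Your proof is correct and complete: the two-step cancellation from the defining relations $MM^\#M=M$ and $MM^\#=M^\#M$, using $\lambda\neq 0$ exactly twice, establishes $M^\#x=\lambda^{-1}x$ in full generality. Note that the paper itself gives no proof of this lemma (it is quoted from \cite{z2023}); your argument is the standard one for that reference, and your decision to avoid the spectral formula for $M^\#$ --- which the paper records only for positive semidefinite matrices --- is the right call for a general square matrix.
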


Let $R(M)=\{x:x=My,y\in\mathbb{R}^n\}$ denote the range of an $m\times n$ real matrix $M$. A real vector $x=(x_1,\ldots,x_n)^\top$ is called \textit{total nonzero} if $x_i\neq0$ for $i=1,\ldots,n$.

For a graph $G$, let $\mathcal{M}(G)$ denote the set of real matrix-vector pairs $(M,x)$ such that\\
(a) $M$ is a positive semidefinite matrix indexed by vertices of $G$ such that $(M)_{ij}=0$ if $i,j$ are two nonadjacent distinct vertices.\\
(b) $x\in R(M)$ is a total nonzero vector.
\begin{lem}\label{lema:2}
\textup{\cite[Theorem 3.1]{z2023}} Let $(M,x)\in\mathcal{M}(G)$ be a matrix-vector pair associated with an $n$-vertex graph $G$. Then the following statements hold:\\
(1) For any independent set $S$ of $G$, we have
\begin{eqnarray*}
|S|^2\leq x^\top M^\#x\sum_{u\in S}\frac{(M)_{uu}}{x_u^2}.
\end{eqnarray*}
(2) The independence number $\alpha(G)$ satisfies
\begin{eqnarray*}
\alpha(G)\leq x^\top M^\#x\max_{u\in V(G)}\frac{(M)_{uu}}{x_u^2}.
\end{eqnarray*}
\end{lem}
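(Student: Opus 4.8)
The plan is to prove (1) by a Cauchy--Schwarz argument in the semi-inner product induced by the group inverse $M^\#$, and then to deduce (2) immediately. The first thing I would record is a structural observation: if $S$ is an independent set of $G$, then any two distinct vertices of $S$ are nonadjacent, so by condition (a) the entries $(M)_{uv}$ with $u\neq v$ in $S$ all vanish; in other words, the principal submatrix of $M$ indexed by $S$ is diagonal. This is precisely what will kill the cross terms in the computation below. Since $M$ is positive semidefinite, writing $M=U\,{\rm diag}(\lambda_1,\ldots,\lambda_n)\,U^\top$ exhibits $M^\#=U\,{\rm diag}(\lambda_1^+,\ldots,\lambda_n^+)\,U^\top$ as again positive semidefinite, so $\langle a,b\rangle := a^\top M^\# b$ is a symmetric positive semidefinite (possibly degenerate) bilinear form, for which Cauchy--Schwarz, $\langle a,b\rangle^2\le\langle a,a\rangle\langle b,b\rangle$, still holds.

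Next I would set up two identities from the defining relation $MM^\#M=M$ and the hypothesis $x\in R(M)$. Writing $x=Mz$ and using $MM^\#M=M$ gives $MM^\#x=x$. Letting $e_u$ denote the standard basis vector at vertex $u$, symmetry of $M$ then yields
\[
\langle Me_u, Me_v\rangle = e_u^\top MM^\#M e_v = (M)_{uv},\qquad \langle Me_u,x\rangle = e_u^\top MM^\# x = e_u^\top x = x_u .
\]
Because $x$ is total nonzero each $x_u\neq0$, so I may form the test vector $y=\sum_{u\in S} x_u^{-1} Me_u$. With the identities above, $\langle y,x\rangle=\sum_{u\in S} x_u^{-1}x_u=|S|$, while
\[
\langle y,y\rangle = \sum_{u,v\in S}\frac{(M)_{uv}}{x_u x_v} = \sum_{u\in S}\frac{(M)_{uu}}{x_u^2},
\]
where the last step uses the structural observation that $(M)_{uv}=0$ for distinct $u,v\in S$. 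Applying Cauchy--Schwarz to $\langle y,x\rangle^2\le\langle y,y\rangle\langle x,x\rangle$ and noting $\langle x,x\rangle=x^\top M^\# x$ gives exactly the inequality in (1).

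Part (2) then follows at once: in the bound of (1) I would estimate $\sum_{u\in S}(M)_{uu}/x_u^2\le |S|\max_{u\in V(G)}(M)_{uu}/x_u^2$ and divide by $|S|$ to obtain $|S|\le x^\top M^\# x\,\max_{u}(M)_{uu}/x_u^2$, after which taking $S$ to be a largest independent set yields the claim. The only genuinely creative step is choosing the test vector $y=\sum_{u\in S} x_u^{-1}Me_u$, engineered so that its pairing with $x$ telescopes to $|S|$ and its own squared norm collapses (via independence) to the diagonal sum; everything else is bookkeeping. The points to handle with care are the justification of $MM^\#x=x$, where the hypothesis $x\in R(M)$ is indispensable, and the appeal to total nonzero-ness of $x$ that makes the coefficients $x_u^{-1}$ well defined.
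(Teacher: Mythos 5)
Your proof is correct, and the key point to note is that the paper never proves this lemma itself: it is imported verbatim from \cite{z2023} (Theorem 3.1 there), so your argument is judged against that source rather than against anything in the present text. Your route --- Cauchy--Schwarz for the positive semidefinite bilinear form $\langle a,b\rangle=a^\top M^\# b$, applied to $x$ and the test vector $y=\sum_{u\in S}x_u^{-1}Me_u$, with the independence of $S$ annihilating the off-diagonal entries $(M)_{uv}$ --- is essentially the same Cauchy--Schwarz/Gram-type argument as in the cited source, and all the delicate points are handled correctly: $M^\#$ exists and is positive semidefinite by the spectral formula in the preliminaries, $MM^\#x=x$ uses exactly the hypothesis $x\in R(M)$, and the division by $|S|$ in part (2) is harmless since a maximum independent set is nonempty and every term $(M)_{uu}/x_u^2$ is nonnegative.
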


For a graph $G=(V(G),E(G))$, let $G^{\square t}$ denote the graph whose vertex set is $V(G)^t$, in which two vertices $u_1\cdots u_t$ and $v_1\cdots v_t$ are adjacent if and only if for each $i\in\{1,\ldots,t\}$ either $u_i=v_i$ or $\{u_i,v_i\}\in E(G)$. The \textit{Shannon capacity} of $G$ is defined as
\begin{eqnarray*}
\Theta(G)=\sup_{t}\alpha(G^{\square t})^{1/t}.
\end{eqnarray*}

For a matrix $A$ over a field $\mathbb{F}$, let ${\rm rank}_\mathbb{F}(A)$ denote the rank of $A$ over $\mathbb{F}$. For a field $\mathbb{F}$ and a graph $G$, let $\mathcal{A}_\mathbb{F}(G)$ denote the set of graph matrices $A$ over $\mathbb{F}$ satisfying the following two conditions:
\begin{description}
    \item[$(a)$] $A$ is an $|V(G)|\times|V(G)|$ matrix indexed by the vertices of $G$.
     \item[$(b)$] $(A)_{ij}=0$ if $i\neq j$ and $\{i,j\}\notin E(G)$.
\end{description}

\begin{lem}\cite{H1979}\label{lema:minrankG}\textup{\cite[Theorem 3.7.8]{BH}}
Let $A\in\mathcal{A}_\mathbb{F}(G)$ be a matrix associated with an $n$-vertex graph $G$. If $(A)_{uu}\neq0$ for each $u\in V(G)$. Then
\begin{eqnarray*}
\alpha(G)\leq\Theta(G)\leq{\rm rank}_\mathbb{F}(A).
\end{eqnarray*}
\end{lem}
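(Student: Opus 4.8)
The plan is to treat the two inequalities separately. The left inequality $\alpha(G)\le\Theta(G)$ is immediate from the definition of the Shannon capacity: taking $t=1$ in $\Theta(G)=\sup_t\alpha(G^{\square t})^{1/t}$ and noting that $G^{\square 1}=G$ (for distinct $u_1,v_1$, the equal-or-adjacent rule reduces to $\{u_1,v_1\}\in E(G)$) gives $\Theta(G)\ge\alpha(G^{\square 1})=\alpha(G)$. So the entire content lies in the right inequality $\Theta(G)\le{\rm rank}_\mathbb{F}(A)$, which I would establish via Haemers' minrank argument.

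First I would prove the base case: for \emph{any} graph $H$ and any matrix $B\in\mathcal{A}_\mathbb{F}(H)$ with $(B)_{uu}\neq0$ for all $u$, one has $\alpha(H)\le{\rm rank}_\mathbb{F}(B)$. To see this, let $S$ be a maximum independent set of $H$ and consider the principal submatrix $B[S]$ indexed by $S$. For distinct $u,v\in S$ we have $\{u,v\}\notin E(H)$, so by condition $(b)$ of $\mathcal{A}_\mathbb{F}(H)$ the entry $(B)_{uv}=0$; hence $B[S]$ is diagonal with nonzero diagonal entries and therefore nonsingular over $\mathbb{F}$. Since the rank of a principal submatrix never exceeds the rank of the full matrix, $|S|={\rm rank}_\mathbb{F}(B[S])\le{\rm rank}_\mathbb{F}(B)$, giving $\alpha(H)\le{\rm rank}_\mathbb{F}(B)$.

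The key step is then to lift this to all strong powers via the Kronecker product. I would take the $t$-fold Kronecker power $A^{\otimes t}$, indexed by $V(G)^t$, whose $(u_1\cdots u_t,\,v_1\cdots v_t)$ entry equals $\prod_{i=1}^t(A)_{u_iv_i}$, and check that $A^{\otimes t}\in\mathcal{A}_\mathbb{F}(G^{\square t})$ with nonzero diagonal. The diagonal entries are $\prod_{i=1}^t(A)_{u_iu_i}\neq0$. For an off-diagonal zero, suppose $u_1\cdots u_t$ and $v_1\cdots v_t$ are distinct and nonadjacent in $G^{\square t}$; by the definition of $G^{\square t}$ there is an index $i$ with $u_i\neq v_i$ and $\{u_i,v_i\}\notin E(G)$, so $(A)_{u_iv_i}=0$ and the whole product vanishes. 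Applying the base case to $H=G^{\square t}$ and $B=A^{\otimes t}$, and using the multiplicativity of rank under Kronecker products, ${\rm rank}_\mathbb{F}(A^{\otimes t})=({\rm rank}_\mathbb{F}(A))^t$, yields $\alpha(G^{\square t})\le({\rm rank}_\mathbb{F}(A))^t$. Taking $t$-th roots and the supremum over $t$ gives $\Theta(G)\le{\rm rank}_\mathbb{F}(A)$, and combining with the first inequality finishes the proof.

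The main obstacle is the structural verification that nonadjacency in the strong power $G^{\square t}$ forces a vanishing entry of $A^{\otimes t}$ --- this is exactly where the ``each coordinate equal-or-adjacent'' definition of the strong product is used, and it is the crux that makes the Kronecker power respect the graph structure. Everything else, namely the diagonal-submatrix rank argument and the multiplicativity ${\rm rank}_\mathbb{F}(A^{\otimes t})=({\rm rank}_\mathbb{F}(A))^t$, is standard linear algebra.
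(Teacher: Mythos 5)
Your proof is correct; note, however, that the paper does not prove this lemma at all---it is quoted as a known result of Haemers (\cite{H1979}, and \cite[Theorem 3.7.8]{BH}). Your argument (the diagonal principal-submatrix rank bound, lifted to all strong powers via the Kronecker power $A^{\otimes t}$ and multiplicativity of rank) is precisely the standard proof given in those references, and every step, including the verification that nonadjacency in $G^{\square t}$ forces a vanishing entry of $A^{\otimes t}$, is sound.
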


An \textit{orthonormal representation} of an $n$-vertex graph $G$ is a set of unit real vectors $\{u_1,\ldots,u_n\}$  such that $u_i^\top u_j=0$ if $i$ and $j$ are two nonadjacent vertices in $G$. The \textit{value} of an orthonormal representation $\{u_1,\ldots,u_n\}$ is defined to be
\begin{eqnarray*}
\min_c\max_{1\leq i\leq n}\frac{1}{(c^\top u_i)^2},
\end{eqnarray*}
where $c$ ranges over all unit real vectors. The \textit{Lov\'{a}sz theta function} $\vartheta(G)$ \cite{Lovasz1979} is the minimum value of all orthonormal representations of $G$.

\begin{lem}\label{lema:4}
\textup{\cite[Theorem 4]{Lovasz1979}} Let $G$ be a graph with vertex set $\{1,\ldots,n\}$, and let $e$ be the all-ones vector of dimension $n$. Then
\begin{eqnarray*}
\vartheta(G)=\max_Be^\top Be,
\end{eqnarray*}
where $B$ range over all positive semidefinite real matrices of order $n$ such that ${\rm tr}~B=1$ and $(B)_{ij}=0$ for every pair $\{i,j\}\in E(G)$.
\end{lem}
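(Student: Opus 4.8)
The plan is to recognize the right-hand side as the optimal value of a semidefinite program whose Lagrangian dual is a reformulation of the orthonormal-representation minimization that \emph{defines} $\vartheta(G)$, and then to pin the two values together. Writing $J=ee^\top$ (so that the asserted maximum is $\langle J,B\rangle=e^\top Be$ in the trace inner product $\langle\cdot,\cdot\rangle$) and $E_{ij}=e_ie_j^\top+e_je_i^\top$, the quantity to be computed is the primal value
\[
\vartheta_P=\max\{\langle J,B\rangle:\ \langle I,B\rangle=1,\ \langle E_{ij},B\rangle=0\ (\{i,j\}\in E(G)),\ B\succeq0\}.
\]
Introducing a multiplier $\lambda$ for the trace constraint and $y_{ij}$ for the edge constraints, the dual is
\[
\vartheta_D=\min\{\lambda:\ \lambda I+C\succeq J,\ C=\textstyle\sum_{\{i,j\}\in E(G)}y_{ij}E_{ij}\},
\]
a symmetric $C$ supported off-diagonally on the edges. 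I would prove the three facts $\vartheta_P\le\vartheta(G)$, $\vartheta(G)\le\vartheta_D$, and $\vartheta_P=\vartheta_D$; together they collapse $\vartheta_P\le\vartheta(G)\le\vartheta_D=\vartheta_P$ to equalities and give the claim.

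For $\vartheta_P\le\vartheta(G)$ I would argue directly against an arbitrary orthonormal representation $\{u_1,\dots,u_n\}$ of $G$ with unit handle $c$, of value $V=\max_i(c^\top u_i)^{-2}$. Rescaling to $\hat u_i=u_i/(c^\top u_i)$ keeps orthogonality on non-edges, gives $c^\top\hat u_i=1$, and $\|\hat u_i\|^2=(c^\top u_i)^{-2}\le V$. For feasible $B$, Gram-factor $B_{ij}=\langle d_i,d_j\rangle$, so $d_i\perp d_j$ on edges and $\sum_i\|d_i\|^2=\operatorname{tr}B=1$. The products $\hat u_i\otimes d_i$ are then pairwise orthogonal (on edges the $d$'s cancel, on non-edges the $u$'s do); with $W=\sum_i\hat u_i\otimes d_i$ and $s=\sum_j d_j$ one gets $\|W\|^2=\sum_i\|\hat u_i\|^2\|d_i\|^2\le V$ and $\langle W,c\otimes s\rangle=\sum_i(c^\top\hat u_i)\langle d_i,s\rangle=\|s\|^2=\langle J,B\rangle$. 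Cauchy--Schwarz then yields $\langle J,B\rangle\le\|W\|\,\|s\|=\|W\|\sqrt{\langle J,B\rangle}$, i.e.\ $\langle J,B\rangle\le V$; taking the minimum over representations and the maximum over $B$ gives $\vartheta_P\le\vartheta(G)$.

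For $\vartheta(G)\le\vartheta_D$ I would reverse-engineer a representation from any dual-feasible matrix. If $\lambda I+C\succeq J$, set $M=\lambda I+C-J\succeq0$; then $M_{ii}=\lambda-1$, $M_{ij}=-1$ on non-edges, and $M_{ij}$ is free on edges, and $M\succeq0$ forces $\lambda\ge1>0$. Gram-factor $M_{ij}=\langle g_i,g_j\rangle$, append a fresh unit vector $e_0$ orthogonal to all $g_i$, and put $u_i=(e_0+g_i)/\sqrt{\lambda}$. Then $\|u_i\|^2=(1+(\lambda-1))/\lambda=1$ and, on non-edges, $\langle u_i,u_j\rangle=(1+M_{ij})/\lambda=0$, so $\{u_i\}$ is an orthonormal representation of $G$; with handle $c=e_0$ one has $(c^\top u_i)^2=1/\lambda$ for every $i$, so its value is $\lambda$ and $\vartheta(G)\le\lambda$. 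Minimizing over dual-feasible $\lambda$ gives $\vartheta(G)\le\vartheta_D$.

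The remaining and genuinely substantive step is $\vartheta_P=\vartheta_D$, the absence of a duality gap, which I expect to be the main obstacle. Weak SDP duality already gives $\vartheta_P\le\vartheta_D$, so only strong duality is missing, and I would obtain it from Slater's condition: $B=\tfrac1n I$ is positive definite, has trace $1$, and satisfies $\langle E_{ij},\tfrac1n I\rangle=0$ for every edge, so the primal is strictly feasible; hence the dual optimum is attained and $\vartheta_P=\vartheta_D$. This forces all inequalities in $\vartheta_P\le\vartheta(G)\le\vartheta_D=\vartheta_P$ to be equalities, proving $\vartheta(G)=\max_B e^\top Be$. The points demanding care are the correct formation of the dual (the support of $C$ and the sign of the conic constraint) and the verification that Slater's condition indeed certifies strong duality and attainment in this instance.
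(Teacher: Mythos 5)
Your proposal is correct, but note that the paper does not prove this statement at all: Lemma~\ref{lema:4} is quoted verbatim as Theorem~4 of Lov\'asz's 1979 paper and used as a black box, so there is no in-paper proof to compare against. What you have written is a genuine, self-contained proof of the cited classical result. Your three steps check out: the primal--dual pair is formed correctly (the dual matrix $C$ supported off-diagonally on edges, conic constraint $\lambda I + C \succeq J$); the tensor-product argument for $\vartheta_P\le\vartheta(G)$ is sound (pairwise orthogonality of the $\hat u_i\otimes d_i$ holds because edges kill the $d$-factors and non-edges kill the $u$-factors, and the Cauchy--Schwarz step closes correctly since $\langle J,B\rangle\le\|W\|\sqrt{\langle J,B\rangle}$ yields $\langle J,B\rangle\le V$); the Gram-lift construction $u_i=(e_0+g_i)/\sqrt{\lambda}$ for $\vartheta(G)\le\vartheta_D$ is exactly right, including the observation that $M\succeq 0$ forces $\lambda\ge 1$; and Slater's condition via $B=\tfrac1n I$ (strictly feasible, with the primal value finite since the feasible set is compact) legitimately closes the duality gap. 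Two cosmetic points you may want to make explicit: in the first step, a handle with $c^\top u_i=0$ for some $i$ gives infinite value, so such pairs never affect the infimum defining $\vartheta(G)$ and your rescaling is harmless; and in the Cauchy--Schwarz step the degenerate case $\langle J,B\rangle\le 0$ is trivial since $V\ge 1$. Your route is essentially the modern textbook treatment: the two constructions mirror Lov\'asz's original tensor-product lemma and ``umbrella'' argument, while SDP strong duality replaces the ad hoc minimax reasoning in his paper.
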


Let $G$ be a graph with vertex set $\{1,\ldots,n\}$, and let $e$ be the all-ones vector of dimension $n$. In \cite{Schrijver}, the Schrijver theta function $\vartheta'(G)$ is defined as
\begin{eqnarray*}
\vartheta'(G)=\max_Be^\top Be,
\end{eqnarray*}
where $B$ range over all positive semidefinite nonnegative matrices of order $n$ such that ${\rm tr}~B=1$ and $(B)_{ij}=0$ for every pair $\{i,j\}\in E(G)$. Schrijver proved that $\alpha(G)\leq\vartheta'(G)\leq\vartheta(G)$ (see \cite[Theorem 1]{Schrijver}).

\section{A framework to unify bounds on $\alpha_k$}\label{sec:frameworkunification}

\textcolor{blue}{}

A partition $V(G)=V_1\cup\cdots\cup V_t$ is called a \textit{$k$-distance coloring} of a graph $G$ if $V_1,\ldots,V_t$ are $k$-independent sets in $G$. Actually, a $k$-distance coloring of $G$ is a proper coloring of $G^k$.

For an $n$-vertex graph $G$, let $\mathcal{M}_k(G)$ denote the set of real matrix-vector pairs $(M,x)$ such that\\
(a) $M$ is a positive semidefinite $n\times n$ matrix indexed by vertices of $G$ such that $(M)_{ij}=0$ if $d(i,j)>k$.\\
(b) $x=(x_1,\ldots,x_n)^\top\in R(M)$ is a total nonzero vector.

Using Lemma 2 we can then obtain the following result.
\begin{thm}\label{thm1}
Let $(M,x)\in\mathcal{M}_k(G)$ be a matrix-vector pair associated with an $n$-vertex graph $G$. Then the following statements hold:\\
(1) For any $k$-independent set $S$ of $G$, we have
\begin{eqnarray*}
|S|^2\leq x^\top M^\#x\sum_{u\in S}\frac{(M)_{uu}}{x_u^2}.
\end{eqnarray*}
(2) The $k$-independence number $\alpha_k(G)$ satisfies
\begin{eqnarray*}
\alpha_k(G)\leq x^\top M^\#x\max_{u\in V(G)}\frac{(M)_{uu}}{x_u^2}.
\end{eqnarray*}
(3) For any $k$-distance coloring $V(G)=V_1\cup\cdots\cup V_t$, we have
\begin{eqnarray*}
\sum_{i=1}^t|V_i|^2\leq x^\top M^\#x\sum_{u\in V(G)}\frac{(M)_{uu}}{x_u^2}.
\end{eqnarray*}
\end{thm}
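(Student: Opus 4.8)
The plan is to reduce everything to Lemma \ref{lema:2} via the identification $\mathcal{M}_k(G)=\mathcal{M}(G^k)$. First I would observe that in the power graph $G^k$ two distinct vertices $i,j$ are adjacent precisely when $d(i,j)\le k$, so they are nonadjacent precisely when $d(i,j)>k$. Hence condition (a) in the definition of $\mathcal{M}_k(G)$ — that $(M)_{ij}=0$ whenever $d(i,j)>k$ — is word-for-word condition (a) in the definition of $\mathcal{M}(G^k)$ once we relabel $G$ as $G^k$; the requirements that $M$ be positive semidefinite and that $x\in R(M)$ be total nonzero are intrinsic to the pair $(M,x)$ and are unaffected by which graph we attach to it. Therefore every pair $(M,x)\in\mathcal{M}_k(G)$ is a pair in $\mathcal{M}(G^k)$, and the quantities $M^\#$, $x^\top M^\# x$, and $(M)_{uu}/x_u^2$ are literally the same objects whether we view $(M,x)$ over $G$ or over $G^k$.

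With this dictionary in place, statements (1) and (2) are immediate. A $k$-independent set $S$ of $G$ is by definition a set with $d(i,j)>k$ for all distinct $i,j\in S$, i.e.\ exactly an independent set of $G^k$, and $\alpha_k(G)=\alpha(G^k)$. So applying Lemma \ref{lema:2}(1) to $(M,x)\in\mathcal{M}(G^k)$ and the independent set $S$ of $G^k$ yields (1), and applying Lemma \ref{lema:2}(2) to $G^k$ yields (2) verbatim.

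For statement (3), I would use that a $k$-distance coloring $V(G)=V_1\cup\cdots\cup V_t$ is a proper coloring of $G^k$, so each color class $V_i$ is a $k$-independent set. Applying part (1) to each $V_i$ gives $|V_i|^2\le x^\top M^\# x\sum_{u\in V_i}(M)_{uu}/x_u^2$; summing over $i=1,\dots,t$ and using that the $V_i$ partition $V(G)$ collapses the double sum $\sum_{i}\sum_{u\in V_i}$ into $\sum_{u\in V(G)}$, which is exactly the claimed inequality.

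There is no serious obstacle here: the entire content of the theorem is the observation that the zero-pattern condition defining $\mathcal{M}_k(G)$ coincides with the adjacency structure of $G^k$, after which Lemma \ref{lema:2} does all the work. The only point requiring a little care is to state the identification $\mathcal{M}_k(G)=\mathcal{M}(G^k)$ precisely and to verify that the optimized quantities transfer unchanged, since all three parts hinge on this single correspondence. The summation step in part (3) is the one genuinely new manipulation beyond Lemma \ref{lema:2}, but it amounts to nothing more than additivity of the right-hand side over a partition.
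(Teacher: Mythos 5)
Your proposal is correct and follows exactly the paper's own argument: identify $\mathcal{M}_k(G)=\mathcal{M}(G^k)$ so that parts (1) and (2) follow from Lemma \ref{lema:2}, and obtain part (3) by applying part (1) to each color class and summing over the partition. You simply spell out the details that the paper's terse proof (``Part (1) implies part (3)'') leaves implicit.
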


\begin{proof}
Since $\mathcal{M}_k(G)=\mathcal{M}(G^k)$, parts (1) and (2) follow from Lemma \ref{lema:2}. Part (1) implies part (3).
\end{proof}

Let $\mathbb{R}_k[x]$ denote the set of polynomials of real coefficients and degree at most $k$. For a graph $G$, let $\mathcal{S}(G)$ be the set of real symmetric matrices $A$ such that $(A)_{ij}=0$ if $d(i,j)>1$.
\begin{thm}\label{thm:6}
Let $A\in\mathcal{S}(G)$ be a matrix associated with a graph $G$ such that $A$ has an eigenvalue $\lambda$ with a total nonzero eigenvector $y$. Let $p(x)\in \mathbb{R}_k[x]$ be a polynomial such that $p(A)$ is positive semidefinite and $p(\lambda)>0$. Then
\begin{eqnarray*}
\alpha_k(G)\leq\frac{y^\top y}{p(\lambda)}\max_{u\in V(G)}\frac{(p(A))_{uu}}{y_u^2}.
\end{eqnarray*}
\end{thm}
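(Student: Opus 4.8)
The plan is to obtain this as a direct specialization of Theorem~\ref{thm1}(2): I would apply that bound to the pair $(M,x)=(p(A),y)$, after checking that this pair belongs to $\mathcal{M}_k(G)$, and then simplify the resulting expression. Positive semidefiniteness of $M=p(A)$ holds by hypothesis, so the substance of condition~(a) in the definition of $\mathcal{M}_k(G)$ is the support requirement $(p(A))_{ij}=0$ whenever $d(i,j)>k$.

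To verify this support condition, I would use a lazy-walk argument. Because $A\in\mathcal{S}(G)$, the matrix $A$ is supported on the diagonal and on the edges of $G$, so a nonzero entry $(A^m)_{ij}$ corresponds to a length-$m$ walk from $i$ to $j$ in which each step either stays at a vertex or traverses an edge. Reaching $j$ from $i$ requires at least $d(i,j)$ genuine edge-steps, so $(A^m)_{ij}=0$ whenever $d(i,j)>m$. Writing $p(A)=\sum_{m=0}^{k}c_mA^m$ with $\deg p\le k$, every summand vanishes at position $(i,j)$ once $d(i,j)>k\ge m$, and hence $(p(A))_{ij}=0$ for $d(i,j)>k$. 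This is the one step where the degree-$k$ hypothesis on $p$ is genuinely used, and I expect it to be the main (if routine) point to get right. Condition~(b) is easier: $y$ is total nonzero by assumption, and $Ay=\lambda y$ gives $p(A)y=p(\lambda)y$, so $y=p(\lambda)^{-1}p(A)y\in R(p(A))$ since $p(\lambda)>0$. Thus $(p(A),y)\in\mathcal{M}_k(G)$.

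With membership established, Theorem~\ref{thm1}(2) yields $\alpha_k(G)\le y^\top(p(A))^\#y\cdot\max_{u\in V(G)}(p(A))_{uu}/y_u^2$, and it remains only to evaluate the scalar $y^\top(p(A))^\#y$. Since $p(A)$ is positive semidefinite, its group inverse exists, and because $p(\lambda)\ne0$ is an eigenvalue of $p(A)$ with eigenvector $y$, Lemma~\ref{lema:1} gives $(p(A))^\#y=p(\lambda)^{-1}y$. Therefore $y^\top(p(A))^\#y=y^\top y/p(\lambda)$, and substituting this into the displayed bound produces exactly the claimed inequality, completing the argument.
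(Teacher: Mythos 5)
Your proposal is correct and follows essentially the same route as the paper: verify $(p(A),y)\in\mathcal{M}_k(G)$, use Lemma~\ref{lema:1} to compute $y^\top p(A)^\# y = y^\top y/p(\lambda)$, and apply Theorem~\ref{thm1}(2). The only difference is one of detail, not substance: you spell out the walk-counting argument for the support condition $(p(A))_{ij}=0$ when $d(i,j)>k$ and the verification that $y\in R(p(A))$, both of which the paper states without elaboration.
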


\begin{proof}
Since $Ay=\lambda y$, we have $p(A)y=p(\lambda)y$. So $y\in R(p(A))$. From the definition of $A$, we know that $(p(A))_{ij}=0$ whenever $d(i,j)>k$. Hence $(p(A),e)\in\mathcal{M}_k(G)$. By Lemma \ref{lema:1}, we have
\begin{eqnarray*}
p(A)^\#y=p(\lambda)^{-1}y,\\
y^\top p(A)^\#y=\frac{y^\top y}{p(\lambda)}.
\end{eqnarray*}
By Theorem \ref{thm1}, we have
\begin{align*}
&\alpha_k(G)\leq\frac{y^\top y}{p(\lambda)}\max_{u\in V(G)}\frac{(p(A))_{uu}}{y_u^2}.
\qedhere\end{align*}
\end{proof}
Let $\overline{G}$ denote the complement of a graph $G$.
\begin{thm}\label{thm:7}
Let $A\in\mathcal{S}(G)$ be a matrix associated with a graph $G$, and let $p(x)\in \mathbb{R}_k[x]$ be a polynomial such that $p(A)$ is positive semidefinite and ${\rm tr}~p(A)>0$. Then
\begin{eqnarray*}
\vartheta(\overline{G^k})\geq\frac{e^\top p(A)e}{{\rm tr}~p(A)},
\end{eqnarray*}
where $e$ is the all-ones vector. Furthermore, if $p(A)$ is nonnegative, then
\begin{eqnarray*}
\vartheta'(\overline{G^k})\geq\frac{e^\top p(A)e}{{\rm tr}~p(A)}.
\end{eqnarray*}
\end{thm}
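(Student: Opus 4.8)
The plan is to exhibit, for the given data, an explicit feasible point in the semidefinite program for $\vartheta(\overline{G^k})$ furnished by Lemma~\ref{lema:4}, and then simply read off the inequality. Applying Lemma~\ref{lema:4} to the graph $\overline{G^k}$ gives
\[
\vartheta(\overline{G^k})=\max_B e^\top B e,
\]
where $B$ ranges over all positive semidefinite matrices with ${\rm tr}~B=1$ and $(B)_{ij}=0$ for every edge $\{i,j\}$ of $\overline{G^k}$. Since $\{i,j\}$ is an edge of $\overline{G^k}$ exactly when $i\neq j$ and $d(i,j)>k$ in $G$, any matrix that is positive semidefinite, has trace $1$, and vanishes off the diagonal on all pairs at distance greater than $k$ is feasible for this program.

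First I would verify the support condition for $p(A)$. Because $A\in\mathcal{S}(G)$ satisfies $(A)_{ij}=0$ whenever $d(i,j)>1$, an easy induction on $m$ shows $(A^m)_{ij}=0$ whenever $d(i,j)>m$: a nonzero entry $(A^m)_{ij}$ witnesses a walk of length $m$ from $i$ to $j$, which forces $d(i,j)\leq m$. As $p$ has degree at most $k$, the matrix $p(A)$ is a real linear combination of $I,A,\ldots,A^k$, and hence $(p(A))_{ij}=0$ whenever $d(i,j)>k$.

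Next I would normalize. Set $B=\frac{1}{{\rm tr}~p(A)}\,p(A)$, which is well defined since ${\rm tr}~p(A)>0$. Then $B$ is positive semidefinite (as $p(A)$ is and the scalar is positive), ${\rm tr}~B=1$, and by the previous step $(B)_{ij}=0$ for every edge of $\overline{G^k}$. Thus $B$ is feasible in the program above, so
\[
\vartheta(\overline{G^k})\geq e^\top B e=\frac{e^\top p(A)e}{{\rm tr}~p(A)},
\]
which is the first claim.

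Finally, the Schrijver refinement follows from the very same feasible point. The function $\vartheta'(\overline{G^k})$ is given by the identical maximization subject to the extra requirement that the competitor $B$ be entrywise nonnegative. If $p(A)$ is nonnegative, then $B=p(A)/{\rm tr}~p(A)$ is nonnegative as well, so $B$ remains feasible for the $\vartheta'$ program and yields the same lower bound. The only genuine content is the support computation of the second step; the rest is a direct substitution into the Lov\'asz and Schrijver SDP formulations, so I do not expect a real obstacle beyond carefully checking that the degree-$k$ hypothesis on $p$ is exactly what makes $p(A)$ supported within distance $k$.
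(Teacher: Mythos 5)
Your proposal is correct and follows essentially the same route as the paper: construct $B=p(A)/{\rm tr}\,p(A)$, check it is a feasible point for the Lov\'asz SDP of Lemma~\ref{lema:4} applied to $\overline{G^k}$ (and for Schrijver's program when $p(A)$ is entrywise nonnegative), and read off the bound. Your explicit verification that $(p(A))_{ij}=0$ whenever $d(i,j)>k$ merely spells out a step the paper leaves implicit, so there is no substantive difference.
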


\begin{proof}
Let $B=({\rm tr}~p(A))^{-1}p(A)$, then $B$ is positive semidefinite with ${\rm tr}~B=1$. Notice that $(B)_{ij}=0$ for every pair $\{i,j\}\in E(\overline{G^k})$. By Lemma \ref{lema:4}, we have
\begin{eqnarray*}
\vartheta(\overline{G^k})\geq e^\top Be=\frac{e^\top p(A)e}{{\rm tr}~p(A)}.
\end{eqnarray*}
If $p(A)$ is nonnegative, then by the definition of $\vartheta'(\overline{G^k})$, we have
\begin{align*}
&\vartheta'(\overline{G^k})\geq\frac{e^\top p(A)e}{{\rm tr}~p(A)}.
\qedhere\end{align*}
\end{proof}

We can derive the following result from Theorems \ref{thm:6} and \ref{thm:7}.

\begin{thm}\label{thm2}
Let $A\in\mathcal{S}(G)$ be a matrix associated with an $n$-vertex graph $G$ such that $A$ has constant row sum $d$. Let $p(x)\in \mathbb{R}_k[x]$ be a polynomial such that $p(A)$ is positive semidefinite and $p(d)>0$. Then
\begin{eqnarray*}
\alpha_k(G)&\leq&\frac{n}{p(d)}\max_{u\in V(G)}(p(A))_{uu},\\
\vartheta(\overline{G^k})&\geq&\frac{np(d)}{{\rm tr}~p(A)}.
\end{eqnarray*}
\end{thm}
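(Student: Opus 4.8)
The plan is to read off both inequalities as immediate specializations of Theorems \ref{thm:6} and \ref{thm:7}, applied to the eigenpair $(d,e)$ that a constant row sum forces on $A$. Since every row of $A$ sums to $d$, the all-ones vector satisfies $Ae=de$, so $d$ is an eigenvalue of $A$ with eigenvector $e$; crucially, $e$ is total nonzero, which is exactly the hypothesis that the two source theorems require. Moreover, $Ae=de$ propagates through powers, $A^{j}e=d^{j}e$, so for our polynomial we get $p(A)e=p(d)e$. The only real work, therefore, is to substitute $\lambda=d$ and $y=e$ and simplify.

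For the first bound, I would invoke Theorem \ref{thm:6} with $\lambda=d$ and $y=e$. Its hypotheses are met: $p(A)$ is positive semidefinite by assumption, and $p(\lambda)=p(d)>0$. Since $e^\top e=n$ and $e_u=1$ for every vertex $u$, the conclusion $\alpha_k(G)\le\frac{y^\top y}{p(\lambda)}\max_{u}\frac{(p(A))_{uu}}{y_u^2}$ collapses directly to $\alpha_k(G)\le\frac{n}{p(d)}\max_{u}(p(A))_{uu}$.

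For the second bound, I would appeal to Theorem \ref{thm:7}. The one hypothesis to check is ${\rm tr}~p(A)>0$: since $p(A)$ is positive semidefinite, all its eigenvalues are nonnegative, and because $p(A)e=p(d)e$ the value $p(d)$ is among them, so $p(d)>0$ forces ${\rm tr}~p(A)\ge p(d)>0$. The same identity gives $e^\top p(A)e=p(d)\,e^\top e=np(d)$, whence Theorem \ref{thm:7}'s conclusion $\vartheta(\overline{G^k})\ge\frac{e^\top p(A)e}{{\rm tr}~p(A)}$ becomes $\vartheta(\overline{G^k})\ge\frac{np(d)}{{\rm tr}~p(A)}$.

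I do not expect a genuine obstacle here: the statement is a clean corollary of the two preceding theorems, and essentially all of its content lies in recognizing that a constant row sum supplies a total nonzero eigenvector $e$ for the eigenvalue $d$, together with the identity $p(A)e=p(d)e$. If anything deserves care, it is checking that the positivity assumptions transfer correctly, namely that $p(d)>0$ and positive semidefiniteness of $p(A)$ guarantee ${\rm tr}~p(A)>0$ so that the ratio in the theta bound is well defined; but this is the one-line consequence recorded above of $p(d)$ being an eigenvalue of the positive semidefinite matrix $p(A)$.
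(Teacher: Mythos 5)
Your proposal is correct and follows exactly the paper's own route: apply Theorem \ref{thm:6} with the eigenpair $(d,e)$ for the first bound and Theorem \ref{thm:7} for the second, using $p(A)e=p(d)e$ to simplify. In fact, your justification that ${\rm tr}\,p(A)\geq p(d)>0$ (since $p(d)$ is an eigenvalue of the positive semidefinite matrix $p(A)$) spells out a step the paper's proof merely asserts.
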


\begin{proof}
Let $e$ be the all-ones vector, then $Ae=de$ and $p(A)e=p(d)e$. By Theorem \ref{thm:6}, we have
\begin{align*}
&\alpha_k(G)\leq \frac{n}{p(d)}\max_{u\in V(G)}(p(A))_{uu}.
\qedhere\end{align*}
By $p(d)>0$, we know that ${\rm tr}~p(A)>0$. By Theorem \ref{thm:7}, we have
\begin{eqnarray*}
\vartheta(\overline{G^k})\geq\frac{e^\top p(A)e}{{\rm tr}~p(A)}=\frac{np(d)}{{\rm tr}~p(A)}.
\end{eqnarray*}
\end{proof}
A \emph{walk} (of length $k$) in a graph $G$ is an alternating sequence 
\begin{eqnarray*}
u_1e_1u_2e_2\cdots u_ke_ku_{k+1}
\end{eqnarray*}
such that $v_i,v_{i+1}$ are two distinct end-vertices of the edge $e_i$ ($i=1,\ldots,k$). If $u_1=u_{k+1}$, then this walk is called a closed walk. Let $w_k(G)$ denote the number of walks of length $k$ in $G$, and let $c_k(G)$ denote the number of closed walks of length $k$ in $G$.
\begin{cor}
Let $G$ be a graph with at least one edge. For any positive integer $k$, we have
\begin{eqnarray*}
\vartheta'(\overline{G^{2k}})\geq\frac{w_{2k}(G)}{c_{2k}(G)}.
\end{eqnarray*}
\end{cor}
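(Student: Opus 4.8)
The plan is to apply Theorem \ref{thm:7} with the choices $A=A_G$ (the adjacency matrix of $G$) and $p(x)=x^{2k}$, playing the role of the parameter $k$ in that theorem with $2k$. First I would check the hypotheses. The adjacency matrix $A_G$ lies in $\mathcal{S}(G)$, since its $(i,j)$ entry vanishes whenever $i\neq j$ are nonadjacent, and in particular whenever $d(i,j)>1$. The polynomial $x^{2k}$ has degree $2k$, so it belongs to $\mathbb{R}_{2k}[x]$. The matrix $p(A_G)=A_G^{2k}=(A_G^{k})^2$ is the square of a symmetric matrix, hence positive semidefinite; this evenness is precisely why the statement concerns $G^{2k}$ rather than an arbitrary power, since an odd power of $A_G$ need not be positive semidefinite. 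Moreover $A_G$ is entrywise nonnegative, so $A_G^{2k}$ is entrywise nonnegative as well, which is exactly the extra condition that lets one invoke the Schrijver bound $\vartheta'$ rather than merely $\vartheta$.

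The last remaining hypothesis is ${\rm tr}(A_G^{2k})>0$. Since $G$ has at least one edge $\{u,v\}$, traversing it back and forth $k$ times gives a closed walk of length $2k$ based at $u$, so $(A_G^{2k})_{uu}\geq 1$ and the trace is strictly positive. With all hypotheses verified, Theorem \ref{thm:7} yields
\[
\vartheta'(\overline{G^{2k}})\geq\frac{e^\top A_G^{2k}e}{{\rm tr}(A_G^{2k})}.
\]

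Finally I would rewrite both sides in terms of walk counts using the standard interpretation of powers of the adjacency matrix: the entry $(A_G^{2k})_{uv}$ equals the number of walks of length $2k$ from $u$ to $v$. Summing over all ordered pairs $(u,v)$ gives $e^\top A_G^{2k}e=w_{2k}(G)$, while summing the diagonal entries gives ${\rm tr}(A_G^{2k})=c_{2k}(G)$; substituting these into the displayed inequality finishes the proof. I expect no substantive obstacle here, as every step is either a direct hypothesis check or a routine identity. The only points needing a moment's care are recognizing that the even exponent is what secures positive semidefiniteness of $p(A_G)$, and that the entrywise nonnegativity of $A_G^{2k}$ is what upgrades the conclusion from $\vartheta$ to $\vartheta'$.
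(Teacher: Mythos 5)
Your proposal is correct and matches the paper's proof essentially step for step: same choice $A=A_G$, same polynomial $p(x)=x^{2k}$, same application of Theorem \ref{thm:7}, and the same walk-counting identities $e^\top A^{2k}e=w_{2k}(G)$ and ${\rm tr}\,A^{2k}=c_{2k}(G)$. You simply spell out a few hypothesis checks (positive semidefiniteness via the even power, positivity of the trace from the existence of an edge) that the paper leaves implicit.
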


\begin{proof}
Let $A$ be the adjacency matrix of $G$. For any positive integer $t$, it is well known that $(A^t)_{ij}$ equals to the number of walks of length $t$ from the vertex $i$ to the vertex $j$. Then $A^{2k}$ is a positive semidefinite nonegative matrix and $c_{2k}(G)={\rm tr}~A^{2k}>0$. Let $e$ be the all-ones vector. By Theorem \ref{thm:7}, we have
\begin{align*}
&\vartheta'(\overline{G^{2k}})\geq\frac{e^\top A^{2k}e}{{\rm tr}~A^{2k}}=\frac{w_{2k}(G)}{c_{2k}(G)}.
\qedhere\end{align*}
\end{proof}

\section{Consequences}\label{sec:consequences}

\subsection{An SDP bound on $\alpha_k$}

The spectral radius of the adjacency matrix of a graph $G$ is called the \textit{spectral radius} of $G$. The \textit{principal eigenvector} of a connected graph $G$ is the unique positive eigenvector $y$ corresponding to the spectral radius of $G$ such that $\sum_{u\in V(G)}y_u^2=1$.

\begin{cor}\label{coro:alphaksdpbound}
Let $G$ be a connected graph with spectral radius $\rho$ and principal eigenvector $y$. Let $p(x)\in \mathbb{R}_k[x]$ be a polynomial such that $p(A)$ is positive semidefinite and $p(\rho)>0$, where $A$ is the adjacency matrix of $G$. Then
\begin{align*}
&\alpha_k(G)\leq p(\rho)^{-1}\max_{u\in V(G)}\frac{(p(A))_{uu}}{y_u^2}.
\qedhere\end{align*}
\end{cor}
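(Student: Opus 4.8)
The plan is to recognize this corollary as a direct specialization of Theorem~\ref{thm:6}, taking $A$ to be the adjacency matrix of $G$ and $\lambda = \rho$. First I would check that the hypotheses of Theorem~\ref{thm:6} are satisfied. The adjacency matrix $A$ lies in $\mathcal{S}(G)$ because $(A)_{ij} = 0$ exactly when $i$ and $j$ are distinct non-adjacent vertices, i.e.\ when $d(i,j) > 1$. Since $G$ is connected, the Perron--Frobenius theorem ensures that the spectral radius $\rho$ is an eigenvalue of $A$ with an entrywise positive eigenvector; the principal eigenvector $y$ is precisely this vector (after normalization), so $y$ is total nonzero and $Ay = \rho y$. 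Combined with the standing assumptions that $p(A)$ is positive semidefinite and $p(\rho) > 0$, all hypotheses of Theorem~\ref{thm:6} hold with $\lambda = \rho$.

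Applying Theorem~\ref{thm:6} then gives
\[
\alpha_k(G) \leq \frac{y^\top y}{p(\rho)}\max_{u\in V(G)}\frac{(p(A))_{uu}}{y_u^2}.
\]
The last step is to simplify using the normalization built into the definition of the principal eigenvector, namely $\sum_{u\in V(G)} y_u^2 = 1$, which is exactly $y^\top y = 1$. Substituting this value collapses the factor $y^\top y / p(\rho)$ to $p(\rho)^{-1}$ and yields the claimed bound.

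I do not anticipate any real obstacle: the entire argument is a matter of confirming that the principal eigenvector of a connected graph qualifies as the total nonzero eigenvector required by Theorem~\ref{thm:6}, and then using its unit-norm normalization to remove the $y^\top y$ factor. The only point worth double-checking is that the spectral radius $\rho$ indeed admits a positive (hence total nonzero) eigenvector, which is guaranteed precisely by the connectedness hypothesis via Perron--Frobenius; without connectedness this positivity could fail, so that assumption is doing the essential work.
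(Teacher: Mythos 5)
Your proposal is correct and matches the paper's own proof exactly: both apply Theorem~\ref{thm:6} with $A$ the adjacency matrix, $\lambda=\rho$, and the principal eigenvector $y$, then use the normalization $y^\top y=1$ to simplify the bound. Your additional justification via Perron--Frobenius that $y$ is total nonzero is a correct (and worthwhile) elaboration of a step the paper leaves implicit.
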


\begin{proof}
Let $A$ be the adjacency matrix of $G$, then $Ay=\rho y$ and $y^\top y=1$. By Theorem \ref{thm:6}, we have
\begin{align*}
&\alpha_k(G)\leq p(\rho)^{-1}\max_{u\in V(G)}\frac{(p(A))_{uu}}{y_u^2}.
\qedhere\end{align*}
\end{proof}

In order to optimize the bound from Corollary \ref{coro:alphaksdpbound}, we would like to find the polynomial $p$ which minimizes the right-hand side for a given graph $G$. Some observations:
\begin{itemize}
    \item The expression is invariant under multiplying $p$ by a positive scalar. We may therefore assume that $\max_{u\in V(G)} \frac{(p(A))_{uu}}{y_{uu}^2} = 1$.
    \item Maximizing over $u$ is equivalent to requiring $(p(A))_{uu}/y_{uu}^2 \le 1$ for all $u\in V(G)$. This gets rid of the $\min\max$.
    \item The objective function then simplifies to $\min p(\rho)^{-1}$, which we can find by maximizing $p(\rho)$.
\end{itemize}

We therefore solve the following SDP.

\[
\boxed{
\begin{array}{rl}
{\tt maximize} & p(\rho) \\
{\tt subject\ to} & (p(A))_{uu}\le y^2_{uu}\quad u\in V\\
& p(\rho) > 0\\
&p(A) \succcurlyeq 0
\end{array}
}
\]

The value of the bound in Corollary~\ref{coro:alphaksdpbound} then equals one divided by the optimal value of the SDP.

\subsection{Algebraic bounds on $\alpha_k$}

From Theorem \ref{thm2}, we can derive \cite[Theorem 3.2]{acf2019} (see also  \cite[Corollary 28]{AZ2024}) as follows.
\begin{cor}[Ratio-type bound, 
\textup{\cite{acf2019}}] \label{coro:ratiotypebound}Let $G$ be a regular graph with $n$ vertices and adjacency eigenvalues $\lambda_1\geq\cdots\geq\lambda_n$. Let $p(x)\in \mathbb{R}_k(x)$ be a polynomial such that $p(\lambda_1)>\lambda(p)=\min_{2\leq i\leq n}\{p(\lambda_i)\}$. Then
\begin{eqnarray*}
\alpha_k(G)\leq n\frac{\max_{u\in V(G)}(p(A))_{uu}-\lambda(p)}{p(\lambda_1)-\lambda(p)}.
\end{eqnarray*}
\end{cor}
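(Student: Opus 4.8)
The plan is to derive Corollary~\ref{coro:ratiotypebound} by applying Theorem~\ref{thm2}, but the polynomial $p$ appearing in Theorem~\ref{thm2} requires $p(A)$ to be positive semidefinite, whereas the corollary only assumes $p(\lambda_1) > \lambda(p) = \min_{2 \le i \le n} p(\lambda_i)$, with no positivity hypothesis on $p(A)$ itself. The natural fix is to pass from $p$ to a shifted polynomial. Specifically, I would set
\begin{eqnarray*}
q(x) = p(x) - \lambda(p),
\end{eqnarray*}
where $\lambda(p) = \min_{2 \le i \le n} p(\lambda_i)$. Since $G$ is regular, the all-ones vector $e$ is an eigenvector for $\lambda_1 = d$ (the common row sum), and the remaining eigenvalues are $\lambda_2, \ldots, \lambda_n$. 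The eigenvalues of $q(A)$ are precisely $q(\lambda_i) = p(\lambda_i) - \lambda(p)$ for $i = 1, \ldots, n$; by the definition of $\lambda(p)$, each of $q(\lambda_2), \ldots, q(\lambda_n)$ is $\ge 0$, and $q(\lambda_1) = p(\lambda_1) - \lambda(p) > 0$ by hypothesis. Hence every eigenvalue of $q(A)$ is nonnegative, so $q(A)$ is positive semidefinite, and $q(d) = q(\lambda_1) > 0$.

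Next I would check that $q$ is an admissible input for Theorem~\ref{thm2} with $A$ the adjacency matrix, which lies in $\mathcal{S}(G)$ (entries vanish at distance $> 1$), and with constant row sum $d = \lambda_1$. Applying the first inequality of Theorem~\ref{thm2} to $q$ yields
\begin{eqnarray*}
\alpha_k(G) \le \frac{n}{q(\lambda_1)} \max_{u \in V(G)} (q(A))_{uu}.
\end{eqnarray*}
Now I would translate back to $p$ using $q(\lambda_1) = p(\lambda_1) - \lambda(p)$ and, for the diagonal entries, $(q(A))_{uu} = (p(A))_{uu} - \lambda(p)$, the latter because $q(A) = p(A) - \lambda(p) I$ and the identity contributes $\lambda(p)$ to each diagonal entry. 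Substituting gives exactly
\begin{eqnarray*}
\alpha_k(G) \le n \, \frac{\max_{u \in V(G)} (p(A))_{uu} - \lambda(p)}{p(\lambda_1) - \lambda(p)},
\end{eqnarray*}
which is the claimed bound.

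The only genuinely delicate point is verifying that the shifted polynomial $q$ truly satisfies the positive-semidefiniteness requirement, and this is where I would be most careful: the argument hinges on $G$ being regular, so that $A$ has $e$ as a principal eigenvector and the spectral decomposition of $q(A)$ can be read off directly from $q$ evaluated at the eigenvalues $\lambda_1, \ldots, \lambda_n$. For a non-regular graph this step would fail, since the row sum would not be constant and $e$ would not be an eigenvector, which is precisely why the corollary restricts to regular graphs. A minor technical caveat is that the degree of $q$ equals the degree of $p$, so $q \in \mathbb{R}_k[x]$ is preserved under the shift; everything else is a routine substitution, and no further optimization over the polynomial is needed since $p$ is given.
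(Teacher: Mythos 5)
Your proposal is correct and follows essentially the same route as the paper: shift to $q(x)=p(x)-\lambda(p)$, observe $q(\lambda_1)>0$ and that $q(A)=p(A)-\lambda(p)I$ is positive semidefinite, then apply Theorem~\ref{thm2} and translate back. One minor quibble: the positive semidefiniteness of $q(A)$ follows from the spectral mapping theorem for any symmetric $A$ and does not hinge on regularity; regularity is needed only so that $A$ has constant row sum $d=\lambda_1$ (making $e$ an eigenvector), which is what Theorem~\ref{thm2} requires.
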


\begin{proof}
Let $A$ be the adjacency matrix of $G$, then $A$ has constant row sum $\lambda_1$ since $G$ is regular. Let $q(x)=p(x)-\lambda(p)$. Since $p(\lambda_1)>\lambda(p)=\min_{2\leq i\leq n}\{p(\lambda_i)\}$, we know that $q(\lambda_1)=p(\lambda_1)-\lambda(p)>0$ and $q(A)=p(A)-\lambda(p)I$ is positive semidefinite. 

Finally, by Theorem \ref{thm2}, we have
\begin{align*}
&\alpha_k(G)\leq \frac{n}{q(\lambda_1)}\max_{u\in V(G)}(q(A))_{uu}.
\qedhere\end{align*}
\end{proof}

The optimization of the above ratio-type bound for general $k$ was investigated by Fiol \cite{fiol20}, and its tightness was studied by Abiad et al \cite{acf2019}. For $k=3$ the best polynomial for Corollary \ref{coro:ratiotypebound} was obtained by Kavi and Newman \cite{kavi_optimal_2023}.

A graph $G$ is called \textit{$k$-partially walk-regular}, for some integer $k\geq 0$, if the number of closed walks of a given length $l\leq k$, rooted at a vertex $v$, only depends on $l$.

Let $G$ be a graph with adjacency matrix $A$ and eigenvalues $\lambda_1\geq\cdots\geq \lambda_n$, and let $\mathcal{P}_k=\{f(x)\in \mathbb{R}_k[x]:f(\lambda_1)=1,f(\lambda_i)\geq0~for~2\leq i\leq n\}$. The \textit{$k$-minor polynomial} of $G$ is the polynomial $f_k\in\mathcal{P}_k$ such that ${\rm tr}~f_k(A)=\min\{{\rm tr}~f(A):f\in\mathcal{P}_k\}$.

From Theorem \ref{thm2}, we can derive \cite[Theorem 4.1]{fiol20} as follows.
\begin{cor}
\textup{\cite{fiol20}} Let $G$ be a $k$-partially walk-regular graph with adjacency matrix $A$, let $f_k(x)\in \mathbb{R}_k[x]$ be a $k$-minor polynomial. Then
\begin{eqnarray*}
\alpha_k(G)\leq{\rm tr}~f_k(A).
\end{eqnarray*}
\end{cor}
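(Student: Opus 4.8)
The plan is to apply Theorem \ref{thm2} directly, taking for the polynomial the given $k$-minor polynomial $p=f_k$ and for $A$ the adjacency matrix of $G$. Theorem \ref{thm2} has three hypotheses: that $A$ has constant row sum $d$, that $f_k(A)$ is positive semidefinite, and that $f_k(d)>0$. Once these are verified, the whole argument reduces to evaluating the right-hand side $\frac{n}{f_k(d)}\max_{u}(f_k(A))_{uu}$ and recognizing it as ${\rm tr}~f_k(A)$, so the real content is checking the hypotheses and computing the diagonal of $f_k(A)$.

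First I would extract the algebraic consequences of $k$-partial walk-regularity. Since the number of closed walks of length $l$ rooted at $v$ equals $(A^l)_{vv}$, the hypothesis says that $(A^l)_{vv}$ is independent of $v$ for every $l\le k$. Taking $l=2$ gives $(A^2)_{vv}=\deg(v)$, which forces $G$ to be $d$-regular (this is the relevant regime, $k\ge 2$), so $A$ has constant row sum $d$ and $d=\lambda_1$ is the spectral radius. By the defining property of $\mathcal{P}_k$ we then have $f_k(d)=f_k(\lambda_1)=1>0$. Moreover, the eigenvalues of $f_k(A)$ are exactly $f_k(\lambda_1)=1$ together with $f_k(\lambda_i)\ge 0$ for $2\le i\le n$, all of which are nonnegative, so $f_k(A)$ is positive semidefinite. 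Thus the pair $(f_k,A)$ satisfies every hypothesis of Theorem \ref{thm2}.

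The key step is to evaluate the diagonal of $f_k(A)$. Writing $f_k(x)=\sum_{l=0}^{k}c_l x^l$, we get $(f_k(A))_{vv}=\sum_{l=0}^{k}c_l (A^l)_{vv}$, and since each $(A^l)_{vv}$ with $l\le k$ is independent of $v$, all diagonal entries of $f_k(A)$ coincide with a single constant; summing over the $n$ vertices identifies this constant as $\frac{1}{n}{\rm tr}~f_k(A)$, so $\max_{u\in V(G)}(f_k(A))_{uu}=\frac{1}{n}{\rm tr}~f_k(A)$. Feeding this together with $f_k(d)=1$ into the first inequality of Theorem \ref{thm2} yields
\begin{eqnarray*}
\alpha_k(G)\le\frac{n}{f_k(d)}\max_{u\in V(G)}(f_k(A))_{uu}=\frac{n}{1}\cdot\frac{{\rm tr}~f_k(A)}{n}={\rm tr}~f_k(A),
\end{eqnarray*}
which is the claim. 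I do not expect a genuine obstacle here, since all pieces fit together cleanly; the only point requiring care is the translation of the combinatorial walk-regularity hypothesis into the algebraic statement that $f_k(A)$ has constant diagonal, together with the observation (via the $l=2$ count) that regularity is available so that the constant-row-sum hypothesis of Theorem \ref{thm2} applies.
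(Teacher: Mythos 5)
Your proof is correct and follows essentially the same route as the paper: apply Theorem \ref{thm2} with $p=f_k$, note $f_k(\lambda_1)=1$ and $f_k(\lambda_i)\geq 0$ give positive semidefiniteness, and use $k$-partial walk-regularity to turn $n\max_{u}(f_k(A))_{uu}$ into ${\rm tr}\,f_k(A)$. You merely spell out two steps the paper leaves implicit (regularity via the $l=2$ closed-walk count, and the constant diagonal of $f_k(A)$), which is a welcome addition rather than a deviation.
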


\begin{proof}
Let $\lambda_1\geq\cdots\geq\lambda_n$ be the adjacency eigenvalues of $G$, then $A$ has constant row sum $\lambda_1$, and $f_k(\lambda_1),\ldots,f_k(\lambda_n)$ are eigenvalues of $f_k(A)$. Since $f_k(\lambda_1)=1$ and $f_k(\lambda_i)\geq0$ for $2\leq i\leq n$, $f_k(A)$ is positive semidefinite.

Since $G$ is $k$-partially walk-regular, by Theorem \ref{thm2}, we have
\begin{align*}
&\alpha_k(G)\leq n\max_{u\in V(G)}(f_k(A))_{uu}={\rm tr}~f_k(A).
\qedhere\end{align*}
\end{proof}

The optimization and tightness of the above bound was investigated by Fiol in \cite{fiol20}.

The \emph{Laplacian matrix} of a graph $G$ is defined as $L=D-A$, where $D$ is the diagonal matrix of vertex degrees of $G$, $A$ is the adjacency matrix of $G$. For a vertex subset $S$, let $d_k(S)=|S|^{-1}\sum_{u\in S}(L^k)_{uu}$. Eigenvalues of $L$ are called Laplacian eigenvalues of $G$. 

Some inequalities involving independent sets and Laplacian eigenvalues can be found in \cite[inequality (6.15)]{M1991} and \cite[Theorem 4.3]{GN}. By using the largest Laplacian eigenvalue and our general framework from Section \ref{sec:frameworkunification}, we can also obtain the following upper bound for a $k$-independent set $S$ of a graph.
\begin{thm}\label{thm4}
Let $G$ be an $n$-vertex graph with largest Laplacian eigenvalue $\mu>0$. For any $k$-independent set $S$ of $G$, we have
\begin{eqnarray*}
|S|\leq\frac{n(\mu^k-d_k(S))}{\mu^k}.
\end{eqnarray*}
\end{thm}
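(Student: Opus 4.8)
The plan is to apply the general framework developed in Section~\ref{sec:frameworkunification}, specifically Theorem~\ref{thm1}, to the Laplacian matrix $L$ of $G$. First I would observe that $L\in\mathcal{S}(G)$ since $(L)_{ij}=-(A)_{ij}=0$ whenever $i\neq j$ and $d(i,j)>1$. The natural matrix to feed into the framework is $p(L)$ for a suitable polynomial $p$, but here a more direct route is to use a shifted power of $L$. The key algebraic fact is that $L$ is positive semidefinite with smallest eigenvalue $0$ (with eigenvector $e$, the all-ones vector) and largest eigenvalue $\mu$. Consider the matrix $M=\mu^k I-(\mu I-L)^k$, equivalently $M=p(L)$ where $p(x)=\mu^k-(\mu-x)^k$. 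Since every Laplacian eigenvalue $\theta$ satisfies $0\le\theta\le\mu$, we have $0\le\mu-\theta\le\mu$, hence $0\le(\mu-\theta)^k\le\mu^k$, so each eigenvalue $p(\theta)=\mu^k-(\mu-\theta)^k$ of $M$ is nonnegative; thus $M$ is positive semidefinite.

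Next I would verify that $(M,x)\in\mathcal{M}_k(G)$ for an appropriate total nonzero vector $x$. The natural choice is the all-ones vector $e$: since $Le=0$, we get $(\mu I-L)^k e=\mu^k e$, so $Me=\mu^k e-\mu^k e=0$. This is a problem, because then $e$ is in the kernel of $M$ rather than giving a positive value, and $e$ is not in a useful eigenspace for part~(1). Instead, the cleaner approach is to take $p(x)=x^k$ directly, i.e.\ work with $M=L^k$, and apply part~(1) of Theorem~\ref{thm1} together with a complementary counting argument. Here is the refined plan: apply part~(1) of Theorem~\ref{thm1} is awkward because $L^k$ has $e$ in a zero eigenvector direction only when $k\ge 1$ forces $L^k e=0$, so $x=e\notin R(L^k)$ unless we are careful. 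The resolution I expect is to instead bound $|S|$ by relating $\sum_{u\in S}(L^k)_{uu}$ to the eigenvalues directly.

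Concretely, I would prove the bound by a direct eigenvalue/trace argument rather than routing through the pair $(M,x)$. Let $S$ be a $k$-independent set and let $P$ be the $\{0,1\}$-diagonal indicator matrix of $S$, or equivalently consider the principal submatrix of $L^k$ indexed by $S$. Because $S$ is $k$-independent, the off-diagonal entries $(L^k)_{uv}$ for distinct $u,v\in S$ vanish (as $d(u,v)>k$ implies $(L^k)_{uv}=0$), so this principal submatrix is diagonal with entries $(L^k)_{uu}$, giving trace $\sum_{u\in S}(L^k)_{uu}=|S|\,d_k(S)$. On the other hand, writing $L=U\diag(\theta_1,\dots,\theta_n)U^\top$ with $\theta_1=\mu$ the largest eigenvalue, I would use the interlacing or averaging estimate comparing $\sum_{u\in S}(L^k)_{uu}$ against the full trace and the extreme eigenvalue $\mu^k$. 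The cleanest identity is $\mu^k I-L^k=p(L)$ with $p(x)=\mu^k-x^k\ge 0$ on $[0,\mu]$, so $\mu^k I-L^k$ is positive semidefinite; summing its diagonal over $S$ gives $\sum_{u\in S}(\mu^k-(L^k)_{uu})=|S|\mu^k-|S|d_k(S)\ge 0$, and a Hoffman-type ratio comparison against the rank-one piece associated to $e$ yields the factor $n$. The main obstacle I anticipate is handling the all-ones eigenvector correctly: since $Le=0$, one must set up the matrix-vector pair so that $x\in R(M)$ is total nonzero while still capturing the global factor $n/\mu^k$, which likely requires using $M=\mu^k I-(\mu I-L)^k$ together with the principal eigenvector direction, or invoking part~(2)/(3) of Theorem~\ref{thm1} with a careful choice that makes $x^\top M^\# x$ evaluate to $n/\mu^k$ and $\max_u (M)_{uu}/x_u^2$ collapse to the term producing $1-d_k(S)/\mu^k$. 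Verifying that these two quantities combine to give exactly $n(\mu^k-d_k(S))/\mu^k$ is the delicate bookkeeping step.
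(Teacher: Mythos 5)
Your proposal does not close; the step you defer as ``delicate bookkeeping'' is precisely where the entire proof lives, and the obstacle you anticipate there is illusory. In your final paragraph you correctly assemble the right ingredients for the matrix $M=\mu^k I-L^k$: it is positive semidefinite (since the Laplacian eigenvalues lie in $[0,\mu]$), its off-diagonal entries vanish whenever $d(i,j)>k$, and its diagonal sum over $S$ equals $|S|\bigl(\mu^k-d_k(S)\bigr)$. But you then assert that $Le=0$ creates a problem in handling the all-ones vector, and you suggest retreating either to $M=\mu^k I-(\mu I-L)^k$ (which you yourself showed annihilates $e$, so it cannot work) or to some unspecified invocation of parts (2)/(3) of Theorem \ref{thm1}. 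In fact $Le=0$ is exactly what makes the construction succeed: it gives $L^ke=0$, hence
\begin{equation*}
(\mu^k I - L^k)\,e \;=\; \mu^k e ,
\end{equation*}
so $e$ is an eigenvector of $M=\mu^k I-L^k$ with \emph{nonzero} eigenvalue $\mu^k$, and therefore $e\in R(M)$ and $(M,e)\in\mathcal{M}_k(G)$. Lemma \ref{lema:1} then gives $M^\#e=\mu^{-k}e$, so $e^\top M^\# e=n/\mu^k$, and Theorem \ref{thm1}(1) yields
\begin{equation*}
|S|^2\;\le\; \frac{n}{\mu^k}\sum_{u\in S}\bigl(\mu^k-(L^k)_{uu}\bigr)\;=\;\frac{n}{\mu^k}\,|S|\bigl(\mu^k-d_k(S)\bigr),
\end{equation*}
which after dividing by $|S|$ is the theorem. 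This is the paper's proof, and it needs no Hoffman-type comparison, no principal eigenvector, and no rank-one correction.

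To name the gap concretely: you conflated the kernel behaviour of $e$ for $\mu^k I-(\mu I-L)^k$ and for $L^k$ (where $e$ indeed fails to lie in the range, as you observed) with its behaviour for $\mu^k I-L^k$, where $e$ lies in the range with eigenvalue $\mu^k>0$. Because of this confusion you never exhibit a valid pair $(M,x)\in\mathcal{M}_k(G)$ producing the factor $n/\mu^k$; the ``Hoffman-type ratio comparison against the rank-one piece associated to $e$'' is left entirely unspecified and is not an argument. All the pieces you collected are correct, but the single identity that assembles them, $(\mu^k I-L^k)e=\mu^k e$, is missing from your write-up.
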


\begin{proof}
Let $e$ be the all-ones vector, then $(\mu^kI-L^k)e=\mu^ke$. So $e\in R(\mu^kI-L^k)$. Notice that $(L^k)_{ij}=0$ whenever $d(i,j)>k$. Hence $(\mu^kI-L^k,e)\in\mathcal{M}_k(G)$. By Lemma \ref{lema:1}, we have
\begin{eqnarray*}
(\mu^kI-L^k)^\#e=\mu^{-k}e,\\
e^\top(\mu^kI-L^k)^\#e=\frac{n}{\mu^k}.
\end{eqnarray*}
By Theorem \ref{thm1}, we have
\begin{align*}
|S|^2&\leq e^\top(\mu^kI-L^k)^\#e\sum_{u\in S}(\mu^kI-L^k)_{uu},\\
|S|&\leq \frac{n(\mu^k-d_k(S))}{\mu^k}.
\qedhere\end{align*}
\end{proof}
Taking $k=1$ in Theorem \ref{thm4}, we obtain \cite[Theorem 4.3]{GN}.
\begin{cor}
\textup{\cite{GN}} Let $G$ be an $n$-vertex graph with largest Laplacian eigenvalue $\mu>0$. For any independent set $S$ of $G$, we have
\begin{eqnarray*}
|S|\leq\frac{n(\mu-\overline{d}_S)}{\mu},
\end{eqnarray*}
where $\overline{d}_S$ is the average degree of all vertices in $S$.
\end{cor}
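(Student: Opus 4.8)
The plan is to derive this directly from Theorem \ref{thm4} by specializing to $k=1$ and identifying the quantity $d_1(S)$ with the average degree $\overline{d}_S$. First I would observe that a $1$-independent set is precisely an independent set in the ordinary sense: the defining condition $d(i,j)>1$ for distinct $i,j\in S$ says exactly that no two vertices of $S$ are adjacent. Hence the hypotheses of Theorem \ref{thm4} with $k=1$ coincide with the hypotheses of the present statement, and the bound $|S|\le n(\mu^k-d_k(S))/\mu^k$ applies with $\mu^1=\mu$.

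Next I would compute $d_1(S)$ explicitly from its definition $d_k(S)=|S|^{-1}\sum_{u\in S}(L^k)_{uu}$. Taking $k=1$ gives $d_1(S)=|S|^{-1}\sum_{u\in S}(L)_{uu}$. Since $L=D-A$, where $D$ is the diagonal matrix of vertex degrees and $A$ is the adjacency matrix (whose diagonal vanishes), each diagonal entry satisfies $(L)_{uu}=(D)_{uu}-(A)_{uu}=d_u$, the degree of $u$. Therefore $d_1(S)=|S|^{-1}\sum_{u\in S}d_u=\overline{d}_S$ is exactly the average degree of the vertices in $S$.

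Finally, substituting $k=1$, $\mu^1=\mu$, and $d_1(S)=\overline{d}_S$ into the inequality of Theorem \ref{thm4} yields $|S|\le n(\mu-\overline{d}_S)/\mu$, which is the asserted bound. There is essentially no genuine obstacle here: the argument is a one-line specialization, and the only point requiring any verification is the elementary observation that the diagonal of the Laplacian records the vertex degrees, so that $d_1(S)$ agrees with the average degree appearing in the statement.
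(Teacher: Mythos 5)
Your proposal is correct and is exactly the paper's argument: the paper likewise obtains this corollary by taking $k=1$ in Theorem \ref{thm4}, with the identification $d_1(S)=\overline{d}_S$ (via $(L)_{uu}=d_u$) left implicit. Your only addition is spelling out that routine verification, which is fine.
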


\subsection{Bound for the distance-$k$ coloring}

For the $k$-distance coloring of a regular graph, we can derive the following inequality from the new framework.
\begin{thm}
Let $G$ be an $n$-vertex $d$-regular graph with adjacency matrix $A$. Let $p(x)\in \mathbb{R}_k[x]$ be a polynomial such that $p(A)$ is positive semidefinite and $p(d)>0$. For any $k$-distance coloring $V(G)=V_1\cup\cdots\cup V_t$, we have
\begin{eqnarray*}
\sum_{i=1}^t|V_i|^2\leq\frac{n}{p(d)}{\rm tr}~p(A).
\end{eqnarray*}
\end{thm}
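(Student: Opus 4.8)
The plan is to apply part (3) of Theorem \ref{thm1} with the specific matrix-vector pair $(p(A), e)$, where $e$ is the all-ones vector. Since $G$ is $d$-regular, the adjacency matrix $A$ has constant row sum $d$, so $Ae = de$ and consequently $p(A)e = p(d)e$. Because $p(d) > 0$, this shows $e \in R(p(A))$, and $e$ is trivially total nonzero. Moreover, $(p(A))_{ij} = 0$ whenever $d(i,j) > k$, since $A \in \mathcal{S}(G)$ and $p$ has degree at most $k$. Together with the hypothesis that $p(A)$ is positive semidefinite, these facts establish that $(p(A), e) \in \mathcal{M}_k(G)$.

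Next I would compute the two quantities appearing in the bound of Theorem \ref{thm1}(3). For the group inverse term, I apply Lemma \ref{lema:1} to the eigenpair $(p(d), e)$ of $p(A)$: since $p(d) \neq 0$, we get $p(A)^\# e = p(d)^{-1} e$, and therefore
\begin{equation*}
e^\top p(A)^\# e = p(d)^{-1} e^\top e = \frac{n}{p(d)}.
\end{equation*}
For the diagonal sum, since each entry of $e$ equals $1$, we have
\begin{equation*}
\sum_{u \in V(G)} \frac{(p(A))_{uu}}{e_u^2} = \sum_{u \in V(G)} (p(A))_{uu} = {\rm tr}\, p(A).
\end{equation*}

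Finally, substituting these two evaluations directly into the inequality of Theorem \ref{thm1}(3), namely
\begin{equation*}
\sum_{i=1}^t |V_i|^2 \leq e^\top p(A)^\# e \sum_{u \in V(G)} \frac{(p(A))_{uu}}{e_u^2},
\end{equation*}
yields
\begin{equation*}
\sum_{i=1}^t |V_i|^2 \leq \frac{n}{p(d)}\, {\rm tr}\, p(A),
\end{equation*}
which is exactly the claimed bound. This argument is essentially a verification that the standard choice $(p(A), e)$ satisfies all hypotheses of the framework; there is no genuine obstacle, since every step is a routine specialization. The only point requiring minor care is confirming membership in $\mathcal{M}_k(G)$, specifically that the row-sum property forces $e$ into the range of $p(A)$ and that the degree-$k$ polynomial preserves the distance-$k$ zero pattern — both of which follow immediately from the regularity of $G$ and the definition of $\mathcal{S}(G)$.
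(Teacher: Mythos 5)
Your proof is correct and follows exactly the same route as the paper's: form the pair $(p(A),e)\in\mathcal{M}_k(G)$ using regularity, compute $e^\top p(A)^\# e = n/p(d)$ via Lemma \ref{lema:1}, and apply Theorem \ref{thm1}(3). You are in fact slightly more careful than the paper, making explicit that $p(d)>0$ is what guarantees $e\in R(p(A))$ and that the degree-$k$ hypothesis gives the required zero pattern.
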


\begin{proof}
Let $e$ be the all-ones vector, then $Ae=de$ and $p(A)e=p(d)e$. So $e\in R(p(A))$ and $(p(A),e)\in\mathcal{M}_k(G)$. By Lemma 1, we have
\begin{eqnarray*}
p(A)^\#e=p(d)^{-1}e,\\
e^\top p(A)^\#e=\frac{n}{p(d)}.
\end{eqnarray*}
By Theorem \ref{thm1}, we have
\begin{align*}
&\sum_{i=1}^t|V_i|^2\leq\frac{n}{p(d)}{\rm tr}~p(A).
\qedhere\end{align*}
\end{proof}

\subsection{A minimum rank-type bound on $\alpha_k$}

Let $\mathbb{F}_k[x]$ denote the set of polynomials with coefficients in a field $\mathbb{F}$ and degree at most $k$.

\begin{thm}
Let $A\in\mathcal{A}_\mathbb{F}(G)$ be a matrix associated  with an $n$-vertex graph $G$. Let $p(x)\in \mathbb{F}_k[x]$ be a polynomial such that $(p(A))_{uu}\neq0$ for each $u\in V(G)$. Then
\begin{eqnarray*}
\alpha_k(G)\leq\Theta(G^k)\leq{\rm rank}_\mathbb{F}(p(A)).
\end{eqnarray*}
\end{thm}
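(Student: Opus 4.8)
The plan is to reduce this statement to Lemma~\ref{lema:minrankG}, which already gives $\alpha(H)\leq\Theta(H)\leq\mathrm{rank}_\mathbb{F}(B)$ for any graph $H$ and any matrix $B\in\mathcal{A}_\mathbb{F}(H)$ whose diagonal entries are all nonzero. Since $\alpha_k(G)=\alpha(G^k)$ by definition, the natural move is to apply that lemma to the graph $H=G^k$. To do so I need to produce a matrix in $\mathcal{A}_\mathbb{F}(G^k)$ with nonzero diagonal, and the candidate is $p(A)$.

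First I would verify that $p(A)\in\mathcal{A}_\mathbb{F}(G^k)$. Condition $(a)$ is immediate: $p(A)$ is an $|V(G)|\times|V(G)|$ matrix indexed by the vertices of $G$, which are the same as the vertices of $G^k$. For condition $(b)$, I would argue about the support of $p(A)$. Since $A\in\mathcal{A}_\mathbb{F}(G)$, its off-diagonal entry $(A)_{ij}$ can be nonzero only when $\{i,j\}\in E(G)$, i.e.\ when $d(i,j)=1$. A standard induction on $t$ then shows that $(A^t)_{ij}\neq0$ forces $d(i,j)\leq t$: each factor in the product expansion $\sum (A)_{i\,\ell_1}(A)_{\ell_1\ell_2}\cdots(A)_{\ell_{t-1}j}$ can be nonzero only along edges of $G$, so a nonzero entry corresponds to a walk of length $t$ from $i$ to $j$, whence $d(i,j)\leq t$. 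Taking the linear combination $p(A)=\sum_{t=0}^{k}c_tA^t$, any off-diagonal entry $(p(A))_{ij}$ with $i\neq j$ can be nonzero only if $d(i,j)\leq k$, that is, only if $\{i,j\}\in E(G^k)$. Equivalently, if $\{i,j\}\notin E(G^k)$ then $d_G(i,j)>k$ and $(p(A))_{ij}=0$, which is exactly condition $(b)$ for the graph $G^k$.

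With $p(A)\in\mathcal{A}_\mathbb{F}(G^k)$ established and the hypothesis $(p(A))_{uu}\neq0$ for each $u$ supplying the nonzero-diagonal requirement, I would invoke Lemma~\ref{lema:minrankG} with $H=G^k$ and $B=p(A)$ to conclude
\[
\alpha(G^k)\leq\Theta(G^k)\leq\mathrm{rank}_\mathbb{F}(p(A)).
\]
Rewriting $\alpha(G^k)$ as $\alpha_k(G)$ finishes the argument.

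The only genuinely delicate point is the support computation for $p(A)$, i.e.\ confirming that multiplication of matrices in $\mathcal{A}_\mathbb{F}(G)$ behaves like the combinatorial walk bound even over an arbitrary field $\mathbb{F}$. This is really the obstacle to watch: over a general field there can be cancellation, so one cannot claim that $(A^t)_{ij}$ counts walks, but the support inclusion one needs is only a one-way implication (nonzero entry implies short distance), and that direction survives because cancellation can only \emph{destroy} nonzero entries, never create a nonzero entry where no walk exists. Once that monotonicity of supports is handled, the rest is a direct citation of Lemma~\ref{lema:minrankG}.
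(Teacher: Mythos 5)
Your proposal is correct and follows essentially the same route as the paper: the paper's proof also notes that $(p(A))_{uv}=0$ whenever $d(u,v)>k$, so that $p(A)$ qualifies as a matrix in $\mathcal{A}_\mathbb{F}(G^k)$ with nonzero diagonal, and then cites Lemma~\ref{lema:minrankG} applied to $G^k$. Your only addition is spelling out the support/walk argument (including the correct observation that cancellation over an arbitrary field can only kill entries, never create them outside the walk support), which the paper leaves as a one-line remark.
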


\begin{proof}
Note that $(p(A))_{uv}=0$ if $d(u,v)>k$. By Lemma \ref{lema:minrankG}, we have
\begin{align*}
&\alpha_k(G)=\alpha(G^k)\leq\Theta(G^k)\leq{\rm rank}_\mathbb{F}(p(A)).
\qedhere\end{align*}
\end{proof}

The tightness and optimization of the above bound is currently being investigated in \cite{ADF2024}.

\section{Concluding remarks}\label{sec:concludingremarks}

The proposed framework to unify existing and to derive several new algebraic bounds on $\alpha_k$ has a direct application in coding theory. Indeed, several of the eigenvalue bounds which follow form our framework (such as Corollary \ref{coro:ratiotypebound})  have recently been shown to be very useful for bounding the maximal size of a code with a certain minimum distance in several metrics, see \cite{ANR2024,AKR2024}.

The \emph{distance-$k$ chromatic number} $\chi_k(G)$ of a graph $G$ is defined as the chromatic number of $G^k$. Since $\chi_k(G)\geq \frac{n}{\alpha_k(G)}$, our framework yields lower bounds for the distance-$k$ chromatic number of a graph. Moreover, since the \emph{distance-$k$ chromatic index} $\chi_k'(G)$ satisfies
 $$\chi_k'(G) = \chi_k(L(G)) = \chi_1(L(G)^k),$$
where $L(G)$ is the line graph of $G$, then our framework also provides lower bounds for the distance-$k$ chromatic index.

From \cite[Theorem 4.1]{z2023}, we know that the Lov\'asz theta function $\vartheta$ of the power graph holds
\begin{eqnarray*}
\vartheta(G^k)=\min_{(M,x)\in\mathcal{M}_k(G)}x^\top M^\#x\max_{u\in V(G)}\frac{(M)_{uu}}{x_u^2}.
\end{eqnarray*}

Since using \cite{Lovasz1979} we know that $\Theta(G^k)\leq\vartheta(G^k)$, then all algebraic bounds on $\alpha_k(G)$ that we derived in this paper are also upper bounds of $\Theta(G^k)$ and $\vartheta(G^k)$.

In \cite{F1997}, Fiol (improving work from \cite{FG1998}) obtained the following eigenvalue bound on $\alpha_k(G)$.
\begin{theorem}\cite{F1997}\label{thm{F1997alternatingpolys}} If $G$ is a regular graph with adjacency eigenvalues $\lambda_1\geq \cdots \geq \lambda_n$, then
$$\alpha_k(G) \leq \frac{2n}{P_k(\lambda_1)},$$
where $P_k$ is the $k$-alternating polynomial of $G$. 
\end{theorem}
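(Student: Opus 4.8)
The plan is to instantiate the regular-graph machinery of Section~\ref{sec:frameworkunification} at the $k$-alternating polynomial and then extract the stated constant by the classical projection computation underlying Theorem~\ref{thm1}(1). Recall that, for the distinct adjacency eigenvalues $\lambda_1 > \lambda_2 > \cdots > \lambda_d$ of $G$, the $k$-alternating polynomial $P_k$ has degree at most $k$ and maximizes $P_k(\lambda_1)$ subject to $|P_k(\lambda_i)| \le 1$ for $2 \le i \le d$. With $\lambda(P_k) = \min_{2\le i\le n}P_k(\lambda_i) \ge -1$, the shift $q = P_k - \lambda(P_k)$ satisfies $q(A) \succeq 0$ and $q(\lambda_1) = P_k(\lambda_1) - \lambda(P_k) > 0$, so $P_k$ is an admissible choice in Corollary~\ref{coro:ratiotypebound}. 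Since $G$ is regular, $\lambda_1$ is its degree, the all-ones vector is a $\lambda_1$-eigenvector, and the principal spectral projector is $E_1 = \tfrac1n J$.

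First I would record two facts about $M := P_k(A)$. Writing $M = P_k(\lambda_1)E_1 + R$ with $R = \sum_{i\ge2}P_k(\lambda_i)E_i$, the normalization $|P_k(\lambda_i)| \le 1$ gives the spectral-norm bound $\|R\|\le 1$, hence $|R_{uu}|\le 1$ for every vertex $u$; and since $\deg P_k\le k$, the entry $M_{uv}$ is a combination of $(A^0)_{uv},\ldots,(A^k)_{uv}$, so it vanishes whenever $d(u,v)>k$. Now take a $k$-independent set $U$ of size $m$ with characteristic vector $\nu$. As distinct vertices of $U$ are at distance $>k$, all off-diagonal entries of $M$ indexed by $U$ vanish, whence $\nu^\top M\nu = \sum_{u\in U}M_{uu}$. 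I would estimate this one quantity on both sides: from $\nu^\top E_1\nu = m^2/n$ and $\nu^\top R\nu \ge -\|R\|\,m \ge -m$ comes the lower bound $\nu^\top M\nu \ge P_k(\lambda_1)\tfrac{m^2}{n} - m$, while $\sum_{u\in U}(E_1)_{uu} = m/n$ together with $|R_{uu}|\le 1$ gives $\sum_{u\in U}M_{uu}\le P_k(\lambda_1)\tfrac mn + m$. Combining and cancelling a factor $m$ yields $P_k(\lambda_1)\tfrac{m-1}{n}\le 2$, a bound of exactly the advertised shape $2n/P_k(\lambda_1)$.

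The hard part is the precise constant. The crude combination above only delivers $\alpha_k(G)\le 1 + 2n/P_k(\lambda_1)$, and trimming the additive term to reach the sharp $2n/P_k(\lambda_1)$ is the genuinely non-routine step carried out in Fiol's original argument. Doing so means not throwing away the off-diagonal projector mass: one keeps $\nu^\top R\nu = \sum_{i\ge2}P_k(\lambda_i)\|E_i\nu\|^2$ and $\sum_{u\in U}R_{uu} = \sum_{i\ge2}P_k(\lambda_i)\sum_{u\in U}(E_i)_{uu}$, and then plays the two corrections against each other using the telescoping identity $\sum_{i\ge2}(E_i)_{uv} = -\tfrac1n$ for $u\ne v$ under the single constraint $\max_{i\ge2}|P_k(\lambda_i)|\le 1$. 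Handling these cross terms collectively, rather than bounding them eigenvalue-by-eigenvalue, is what sharpens the constant; this is the step I expect to require the most care, and everything else is bookkeeping with the spectral decomposition.
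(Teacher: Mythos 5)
There is a genuine gap, and it is the one you concede yourself: your rigorous computation only establishes $\alpha_k(G)\le 1+2n/P_k(\lambda_1)$, and the step that would remove the additive $1$ is described (``play the two corrections against each other'', ``handle the cross terms collectively'') but never performed. Worse, performing it in the natural way does not suffice. Keeping both corrections exactly, the identity you would obtain is
\[
P_k(\lambda_1)\,\frac{m(m-1)}{n}\;=\;-\sum_{i\ge 2}P_k(\lambda_i)\sum_{\substack{u,v\in U\\ u\ne v}}(E_i)_{uv},
\]
and estimating the right-hand side using only $|P_k(\lambda_i)|\le 1$ together with the positive semidefiniteness of the idempotents $E_i$ gives
\[
\sum_{i\ge 2}\Bigl|\,\|E_i\nu\|^2-\sum_{u\in U}(E_i)_{uu}\Bigr|\;\le\;\Bigl(m-\tfrac{m^2}{n}\Bigr)+\Bigl(m-\tfrac{m}{n}\Bigr),
\]
hence $(m-1)P_k(\lambda_1)\le 2n-m-1$, i.e.\ $m\le (P_k(\lambda_1)+2n-1)/(P_k(\lambda_1)+1)$. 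This implies the claimed $m\le 2n/P_k(\lambda_1)$ exactly when $P_k(\lambda_1)\bigl(P_k(\lambda_1)-1\bigr)\le 2n$, and is strictly weaker otherwise --- in particular in the regime of large $P_k(\lambda_1)$, where the theorem has the most content. So the deferred step is not bookkeeping: with only the norm constraint $\max_{i\ge2}|P_k(\lambda_i)|\le 1$, two-sided estimation of $\nu^\top P_k(A)\nu$ cannot produce the stated constant. Fiol's proof in \cite{F1997} (building on \cite{FG1998}) has to exploit the extremal, alternating structure of $P_k$ as the optimum of its defining linear program, which your argument never uses.

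You should also be aware that the paper itself does not prove this statement: it is quoted in the concluding remarks as a known result of \cite{F1997}, and the authors explicitly remark that although the bound ``uses an analogue proof technique'' to Corollary~\ref{coro:ratiotypebound}, it ``seems not to follow from the framework proposed in this work.'' Your plan is precisely an instantiation of Theorem~\ref{thm1} and Corollary~\ref{coro:ratiotypebound} at $M=P_k(A)$ (or its positive semidefinite shift), so it runs head-on into the obstruction the authors point to, and the place where your argument stalls --- the constant --- is the concrete manifestation of that obstruction.
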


This result was later generalized to nonregular graphs in \cite{F1999}. The polynomial $P_k$ is defined by the solution of a linear
programming problem which depends on the spectrum of the graph $G$. Moreover, despite the bound from Theorem \ref{thm{F1997alternatingpolys}} uses an analogue proof technique as the obtained Corollary \ref{coro:ratiotypebound}, it seems not to follow from the framework proposed in this work.

\subsection*{Acknowledgments}
Aida Abiad is supported by the Dutch Research Council through the grant VI.Vidi.213.085. Jiang Zhou is supported by the National Natural Science Foundation of China (No. 12071097), and the Natural Science Foundation for The Excellent Youth Scholars of the Heilongjiang Province (No. YQ2022A002).


\end{document}